\title[ ]{The M\"{o}bius transformation of  continued fractions  with  bounded upper and lower  partial quotients}
\author{Wencai Liu}
\address[Wencai Liu]{Department of Mathematics, University of California, Irvine, California 92697-3875, USA} \email{liuwencai1226@gmail.com}
\newcommand{\R}{\mathbb{R}}
\newcommand{\Z}{\mathbb{Z}}
\newcommand{\N}{\mathbb{N}}
\theoremstyle{plain}
\newtheorem{theorem}{Theorem}[section]
\newtheorem{lemma}[theorem]{Lemma}
\theoremstyle{definition}
\begin{document}


\begin{abstract}

Let $h$: $x\mapsto \frac{ax+b}{cx+d} $ be the nondegenerate   M\"{o}bius  transformation  with integer entries. We  get a  bound of the continued fraction of $h(x)$ by the upper and lower bound of continued fraction of $x$,
which extends a result of Stambul \cite{S2}.

\end{abstract}
\maketitle

 \section{Introduction}
 A continued fraction representation  of  a number $x\in \R$ is an expansion of the form
 \begin{equation}\label{CE}
    x=a_0+\frac{1}{a_1+\frac{1}{a_2+\frac{1}{a_3+\frac{1}{\ddots}}}}
 \end{equation}
 where $a_0\in \Z$ and $a_i\in \N^{+}$, $i=1,2,\cdots$.
 A continued fraction may be finite or infinite. If  (\ref{CE}) is
a finite  continued fraction,  we denote  it by $[a_0; a_1,a_2,\cdots,a_n]$; if  (\ref{CE}) is infinite, then we denote it by
$[a_0; a_1,a_2,\cdots]$.  We call $a_j$ the $j$th partial quotient. It is a well known fact that the continued fraction of $x$ is infinite iff
$x$ is irrational.

Given a nondegenerate  $2\times 2$  matrix $M$ with integer entries,   that is $M=\left(
                                 \begin{array}{cc}
                                   a & b \\
                                   c & d \\
                                 \end{array}
                               \right)
$, where $a,b,c,d\in \Z$ and the determinant $ad-bc\neq 0$, we can define the associated M\"{o}bius transformation
 $h$: $x\mapsto \frac{ax+b}{cx+d} $. We also denote by
\begin{equation*}
  h(x)=  \left(
                                 \begin{array}{cc}
                                   a & b \\
                                   c & d \\
                                 \end{array}
                               \right)\cdot x=\frac{ax+b}{cx+d}
\end{equation*}

In this paper, we study  the bound of partial quotients under the M\"{o}bius transformation.
We will use $\lfloor x\rfloor=\max\{j\in \Z: j\leq x\}$.
Our main result is
\begin{theorem}\label{maintheorem}
Let $M=\left(
                                 \begin{array}{cc}
                                   a & b \\
                                   c & d \\
                                 \end{array}
                               \right)
$ be a nondegenerate  matrix with entries in $\Z$  and $h$ be the associated M\"{o}bius transformation. Let $x=[a_0;a_1,a_2,\cdots]$ be a real number such that
$B_1\leq a_j\leq B_2$ for $j$ large enough. Let $h(x)=[a_0^{\star};a_1^{\star},a_2^{\star},\cdots]$. Then
$a_j^{\star}\leq \lfloor\frac{D-1}{B_1}\rfloor +\lfloor D \frac{B_1B_2+\sqrt{B_1^2B_2^2+4B_1B_2}}{2B_1}\rfloor$ for large  $j$,
where $D=|\text{det}(M)|$.
\end{theorem}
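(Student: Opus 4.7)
The plan is to express $h(x)$ as a Möbius image of a complete quotient $x_n$ of $x$, then iterate the Gauss map while tracking the evolution of the associated $2\times 2$ integer matrix. Fix $n$ large enough that $B_1 \le a_k \le B_2$ for all $k \ge n$. Using the standard identity $x = (p_{n-1} x_n + p_{n-2})/(q_{n-1} x_n + q_{n-2})$ for the convergents $p_k/q_k$ of $x$, we obtain
\begin{equation*}
h(x) = \frac{\alpha x_n + \beta}{\gamma x_n + \delta}, \qquad \begin{pmatrix} \alpha & \beta \\ \gamma & \delta \end{pmatrix} = M \begin{pmatrix} p_{n-1} & p_{n-2} \\ q_{n-1} & q_{n-2} \end{pmatrix},
\end{equation*}
an integer-entry Möbius transform of $x_n$ with $|\alpha\delta - \beta\gamma| = D$. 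Since $a_k \in [B_1, B_2]$ for $k \ge n$, the complete quotient $x_n$ lies in $[B_1, y_{\max}]$, where $y_{\max} = [B_2; B_1, B_2, B_1, \ldots]$; solving the quadratic fixed-point equation $B_1 y^2 - B_1 B_2 y - B_2 = 0$ yields $y_{\max} = (B_1 B_2 + \sqrt{B_1^2 B_2^2 + 4 B_1 B_2})/(2 B_1)$, exactly the second factor appearing in the stated bound.

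Next, I apply the continued-fraction algorithm to $h(x)$ in this form. At each step, the current complete quotient $y^{(j)}$ of $h(x)$ is expressible as $(\alpha' t + \beta')/(\gamma' t + \delta')$, where $t = x_{n+\ell}$ is some later complete quotient of $x$ (still in $[B_1, y_{\max}]$) and the integer matrix has $|\det| = D$. The $j$-th partial quotient of $h(x)$ is $a_j^{\star} = \lfloor y^{(j)} \rfloor$, and two updates both preserving $|\det| = D$ are available: extracting an output partial quotient corresponds to left-multiplying by $\begin{pmatrix} 0 & 1 \\ 1 & -a_j^{\star} \end{pmatrix}$; if the integer part of $y^{(j)}$ is not yet uniquely determined for all $t \in [B_1, y_{\max}]$, one consumes the next input partial quotient by right-multiplying by $\begin{pmatrix} a_{n+\ell} & 1 \\ 1 & 0 \end{pmatrix}$ and replacing $t$ by $x_{n+\ell+1}$.

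To bound $a_j^{\star}$, I use the identity
\begin{equation*}
\frac{\alpha' t + \beta'}{\gamma' t + \delta'} = \frac{\alpha'}{\gamma'} + \frac{\beta'\gamma' - \alpha'\delta'}{\gamma'(\gamma' t + \delta')} = \frac{\alpha'}{\gamma'} \pm \frac{D}{\gamma'(\gamma' t + \delta')},
\end{equation*}
so that $y^{(j)}$ lies between $\alpha'/\gamma'$ and $\beta'/\delta'$ in an interval of width $D/(\gamma'\delta')$. After enough reduction steps the matrix reaches the Raney-reduced form in which all entries are nonnegative and the determinant sign forces $(\alpha' t + \beta')/(\gamma' t + \delta')$ to be monotone in $t$, so its maximum over $[B_1, y_{\max}]$ is attained at an endpoint. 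A direct endpoint analysis then yields $a_j^{\star} \le \lfloor D y_{\max} \rfloor + \lfloor (D-1)/B_1 \rfloor$: the term $\lfloor D y_{\max} \rfloor$ bounds the leading behavior at $t = y_{\max}$, where the determinant factor $D$ amplifies the scale of $t$, while $\lfloor (D-1)/B_1 \rfloor$ controls the discrete correction from the lower endpoint $t = B_1$ contributed by $D/(\gamma'(\gamma' B_1 + \delta'))$. The principal obstacle is the sign and normalization bookkeeping---showing that the iterated matrices do enter and remain in the reduced form, and carefully counting how $\lfloor (\alpha' t + \beta')/(\gamma' t + \delta') \rfloor$ is governed by the integer parts of the entries. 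This essentially extends Stambul's argument \cite{S2} from the case $B_1 = B_2$; the new ingredient here is to replace the trivial interval bound on $t$ by the sharper fixed-point value $y_{\max}$ coming from the periodic continued fraction $[B_2; B_1, B_2, B_1, \ldots]$.
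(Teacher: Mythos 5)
Your setup (passing to a complete quotient $x_n$ with all later partial quotients in $[B_1,B_2]$, then running a state-machine on integer matrices of determinant $\pm D$ that alternately absorbs input partial quotients and emits output ones) is essentially the Liardet--Stambul/Raney framework the paper also uses, so the skeleton is sound. But the proof has a genuine gap exactly where the theorem's constant has to come from: the sentence ``a direct endpoint analysis then yields $a_j^{\star}\le \lfloor D y_{\max}\rfloor+\lfloor (D-1)/B_1\rfloor$'' is an assertion, not an argument. Knowing only that the state matrix is ``Raney-reduced'' (nonnegative entries, monotone action on $t$) gives no upper bound on $\lfloor(\alpha' t+\beta')/(\gamma' t+\delta')\rfloor$ at all, because $\alpha'/\gamma'$ is not controlled; the whole content of the paper is the structural bookkeeping you wave at. Concretely, the paper needs: (a) the reduction to a matrix in $\varepsilon_2$, whose entries satisfy $|a|+|b|,\,|c|+|d|\le D$, so that endpoint values can be bounded at all; (b) the fact that one emitted partial quotient of $h(x)$ is in general a \emph{sum} $k_1+k_2+\cdots+k_p$ of outputs produced across several absorption steps (the contraction of a word $k_10k_20\cdots0k_p$), so bounding a single emission step is not enough; and (c) the identification of the two extremal scenarios, namely $k_1\le DB_2$ followed by a state with vanishing upper-right entry (giving $DB_2+\lfloor D/x_0\rfloor\le\lfloor Dy_0\rfloor$, where $x_0=[\overline{B_1,B_2}]$), and $k_1\le D-1$ followed by the special state $\bigl(\begin{smallmatrix}0&1\\ D&0\end{smallmatrix}\bigr)$, where the positivity of the previous state forces $-k_1a_N+D\ge 1$ and hence $k_1\le\lfloor(D-1)/B_1\rfloor$. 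Your heuristic attribution of the two terms (``leading behavior at $t=y_{\max}$'' plus ``discrete correction from the lower endpoint $t=B_1$'') does not correspond to any of these mechanisms, and I see no way to make it literal.

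A secondary but real problem: you take the complete quotients of $x$ to lie in $[B_1,y_{\max}]$. The sharp lower bound is $x_0=[\overline{B_1,B_2}]>B_1$, and it is actually needed: in the worst case the bound assembles as $DB_2+\lfloor D/x_0\rfloor$, which equals $\lfloor Dy_0\rfloor$ precisely because $y_0=B_2+1/x_0$; replacing $x_0$ by $B_1$ overshoots the stated constant. Also, ``after enough reduction steps the matrix reaches the reduced form'' needs an argument handling signs (the paper does this by factoring off determinant-$\pm1$ matrices that provably do not change the continued fraction tail, Lemmas 3.2--3.3), and the case $D=1$ has to be disposed of separately before the $\varepsilon_2$ analysis. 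In short, the approach could be completed, but as written the central quantitative claim is unsupported and the decomposition of the bound into its two terms is attributed to the wrong phenomena.
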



Now we always assume $x=[a_0;a_1,a_2,\cdots]$ and  $\frac{ax+b}{cx+d}=[a_0^{\star};a_1^{\star},a_2^{\star},\cdots]$ with
$D=|ad-bc|\geq 1$. Set $M=\left(
                                  \begin{array}{cc}
                                    a & b \\
                                    c & d \\
                                  \end{array}
                                \right)
$ and $h(x)=\frac{ax+b}{cx+d}$.

It is an old result that a real number $\frac{ax+b}{cx+d}$ has bounded partial quotients if $x$ does \cite{Ha,Sh1,Ra}, so
   the quantitative bound becomes an interesting question.
Lagarias-Shallit \cite{LaSh} and Cusick-France \cite{CuMe} obtained  a quantitative bound, which stated that if $x$
has bounded partial quotients with $a_j\leq K$ eventually, then the associated partial quotients $a_j^{\star}$ of $\frac{ax+b}{cx+d}$ satisfy
$a_j^{\star}\leq D(K+2)$ eventually.

Using an algorithm developed by Liardet-Stambul \cite{S1} to calculate the partial quotients of $h(x)$,
Stambul gave a    upper bound $a_j^{\star}\leq D-1 +\lfloor D \frac{K+\sqrt{K^2+4K}}{2}\rfloor$\cite{S2}, which is the  $B_1=1$ case of Theorem \ref{maintheorem}.
In this paper, we concern the partial quotients with lower and upper bound at the same time. Our methods are based
on the refining of  analysis in papers \cite{S1,S2}.


\section{Algorithm for partial quotients}
In this section, we will introduce some notations  and the  algorithm developed by Liardet-Stambul \cite{S1,S2} to calculate the partial quotients of $h(x)$.
Let $M_{2,\N}$ be the set of
all matrices $M=\left(
                  \begin{array}{cc}
                    a & b \\
                    c & d \\
                  \end{array}
                \right)
$ $(a,b,c,d\in \N)$ such that $ad-bc\neq 0$. $M$ is said to be
in $\mathcal{D}_2$ when $a\geq c$ and $b\geq d$, in  $\mathcal{D}_2^{\prime}$ when  $a\leq c$ and  $b\leq d$, and in $ \varepsilon_2$ when
$ (a-c)(b-d)<0$.  $\{\mathcal{D}_2,\mathcal{D}_2^{\prime},\varepsilon_2\}$ is a partition of $M_{2,\N}$.

It is easy to see that
  $M\in\varepsilon_2$ satisfies
\begin{equation}\label{euq4}
\max\{|a|+ |b|, |c|+|d|\} \leq |\text{det}M|=D.
\end{equation}
For all matrices $M \in \mathcal{D}_2\cup\mathcal{D}_2^{\prime}$, there exists a unique
factorization
\begin{equation}\label{Fac}
    M=\left(
        \begin{array}{cc}
          c_0 & 1 \\
          1 & 0 \\
        \end{array}
      \right)
      \left(
        \begin{array}{cc}
          c_1 & 1 \\
          1 & 0 \\
        \end{array}
      \right)
      \cdots
      \left(
        \begin{array}{cc}
           c_n& 1 \\
          1 & 0\\
        \end{array}
      \right)M^{\prime}
\end{equation}
such that $c_0\in \N$, $c_1,\cdots,c_n\in \N^{+}$ and $M^{\prime}\in \varepsilon_2$\cite{S1}.
 This factorization will be
denoted by $M=\Pi_{c_0c_1,\cdots,c_n}M^{\prime}$. Moreover, $[c_0;c_1,c_2,\cdots,c_{n-1}]$ is the common sequence of  partial
quotients of $\frac{a}{c}$ and $\frac{b}{d}$ if $n\neq 1$.
$c_n$ can de determined by the following several cases\cite{S1}.
\begin{itemize}
  \item [\textbf{Case} 1]: If  $\frac{a}{c}=[c_0;c_1,c_2,\cdots,c_{n-1}]$, then $c_n$ is the $n$th partial quotient of $\frac{b}{d}$.
  \item  [\textbf{Case} 2]: If  $\frac{b}{d}=[c_0;c_1,c_2,\cdots,c_{n-1}]$, then $c_n$ is the $n$th partial quotient of $\frac{a}{c}$.
  \item[\textbf{Case} 3]: Otherwise,     $c_n$ is the smaller one of   $n$th partial quotients of $  \frac{a}{c}$ and  $\frac{b}{d}$.
\end{itemize}

Assume $M\in \varepsilon_2$ and $h$ is the associated M\"{o}bius transformation. Let $x=[a_0;a_1,a_2,\cdots]>1$.  Recall the algorithm in \cite{S1,S2} to compute the partial quotients of $h(x)$.

\textbf{Step 0:} $M_0=M\in \varepsilon_2,j=0,n=0$.

Let $j_1$ be the smallest positive integer (see \cite{S1} for the existence) such that $M_0\Pi_{a_0a_1\cdots a_{j_1-1}}\in \varepsilon_2$
and $M_0\Pi_{a_0a_1\cdots a_{j_1}}\in \mathcal{D}_2\cup\mathcal{D}_2^{\prime}$.
Factorizing  $M_0\Pi_{a_0a_1\cdots a_{j_1}}$ as (\ref{Fac}), we get
\begin{equation}\label{Output0}\tag{Output-0}
   M_0\Pi_{a_0a_1\cdots a_{j_1}}=\left(
        \begin{array}{cc}
          c_0 & 1 \\
          1 & 0 \\
        \end{array}
      \right)
      \left(
        \begin{array}{cc}
          c_1 & 1 \\
          1 & 0 \\
        \end{array}
      \right)
      \cdots
      \left(
        \begin{array}{cc}
           c_{n_1}& 1 \\
          1 & 0\\
        \end{array}
      \right)M_1
\end{equation}
with $M_1\in \varepsilon_2$.

\textbf{Step 1:} $M_1\in \varepsilon_2,j=j_1+1,n=n_1+1$.

Let $j_2\geq j_1+1$ be the smallest positive integer such that $M_1\Pi_{a_{j_1+1}a_{j_1+2}\cdots a_{j_2-1}}\in \varepsilon_2$
and $M_1\Pi_{a_{j_1+1}a_{j_1+2}\cdots a_{j_2}}\in \mathcal{D}_2\cup\mathcal{D}_2^{\prime}$.
Factorizing  $M_1\Pi_{a_{j_1+1}a_{j_1+2}\cdots a_{j_2}}$ as (\ref{Fac}), we get
\begin{equation}\label{Output1}\tag{Output-1}
  M_1\Pi_{a_{j_1+1}a_{j_1+2}\cdots a_{j_2}}=\left(
        \begin{array}{cc}
          c_{n_1+1} & 1 \\
          1 & 0 \\
        \end{array}
      \right)
      \left(
        \begin{array}{cc}
          c_{n_1+2} & 1 \\
          1 & 0 \\
        \end{array}
      \right)
      \cdots
      \left(
        \begin{array}{cc}
           c_{n_2}& 1 \\
          1 & 0\\
        \end{array}
      \right)M_2
\end{equation}
with $M_2\in \varepsilon_2$.

\textbf{Step 2:} $M_2\in \varepsilon_2,j=j_2+1,n=n_2+1$.

Let $j_3\geq j_2+1$ be the smallest positive integer such that $M_2\Pi_{a_{j_2+1}a_{j_2+2}\cdots a_{j_3-1}}\in \varepsilon_2$
and $M_2\Pi_{a_{j_2+1}a_{j_2+2}\cdots a_{j_3}}\in \mathcal{D}_2\cup\mathcal{D}_2^{\prime}$.
Factorizing  $M_2\Pi_{a_{j_2+1}a_{j_2+2}\cdots a_{j_3}}$ as (\ref{Fac}), we get
\begin{equation}\label{Output2}\tag{Output-2}
  M_2\Pi_{a_{j_2+1}a_{j_2+2}\cdots a_{j_3}}=\left(
        \begin{array}{cc}
          c_{n_2+1} & 1 \\
          1 & 0 \\
        \end{array}
      \right)
      \left(
        \begin{array}{cc}
          c_{n_2+2} & 1 \\
          1 & 0 \\
        \end{array}
      \right)
      \cdots
      \left(
        \begin{array}{cc}
           c_{n_3}& 1 \\
          1 & 0\\
        \end{array}
      \right)M_3
\end{equation}
with $M_3\in \varepsilon_2$.

\begin{center}
$\cdots\cdots\cdots$
\end{center}
\begin{center}
$\cdots\cdots\cdots$
\end{center}

\textbf{Step k:} $M_k\in \varepsilon_2,j=j_k+1,n=n_k+1$.

Let $j_{k+1}\geq j_k+1$ be the smallest positive integer such that $M_k\Pi_{a_{j_k+1}a_{j_k+2}\cdots a_{j_{k+1}-1}}\in \varepsilon_2$
and $M_k\Pi_{a_{j_k+1}a_{j_k+2}\cdots a_{j_{k+1}}}\in \mathcal{D}_2\cup\mathcal{D}_2^{\prime}$.
Factorizing  $M_k\Pi_{a_{j_k+1}a_{j_k+2}\cdots a_{j_{k+1}}}$ as (\ref{Fac}), we get
\begin{equation}\label{Outputk}\tag{Output-k}
  M_k\Pi_{a_{j_k+1}a_{j_k+2}\cdots a_{j_{k+1}}}=\left(
        \begin{array}{cc}
          c_{n_k+1} & 1 \\
          1 & 0 \\
        \end{array}
      \right)
      \left(
        \begin{array}{cc}
          c_{n_k+2} & 1 \\
          1 & 0 \\
        \end{array}
      \right)
      \cdots
      \left(
        \begin{array}{cc}
           c_{n_{k+1}}& 1 \\
          1 & 0\\
        \end{array}
      \right)M_{k+1}
\end{equation}
with $M_{k+1}\in \varepsilon_2$.

Putting  all the Output (\ref{Outputk}) together, we get a sequence

\begin{equation}\label{Alloutput}\tag{Alloutput-k}
    c_0c_1c_2c_3\cdots c_{n_k}
\end{equation}
Unfortunately, many $c_i$ maybe zero, thus we must introduce the contraction map $\mu$.
For any word  $c_0c_1c_2c_3\cdots c_n\in \N^n$, let $\mu$ be the contraction map which transforms a word into a word
where all letters are positive integers (except perhaps the first one), replacing from
left to right factors $a0b$ by the letter $a + b$.

By the fact
\begin{equation*}
    \left(
       \begin{array}{cc}
         a & 1 \\
         1 & 0 \\
       \end{array}
     \right)
     \left(
       \begin{array}{cc}
         b & 1 \\
         1 & 0\\
       \end{array}
     \right)=
     \left(
       \begin{array}{cc}
         a+b & 1 \\
         1 & 0\\
       \end{array}
     \right),
\end{equation*}
we have
\begin{equation}\label{equ2}
   \Pi_{\mu(c_0c_1c_2c_3\cdots c_n)} =\Pi_{c_0c_1c_2c_3\cdots c_n}
\end{equation}

Let $\mu$ act on (\ref{Alloutput}), then we get
\begin{equation}\label{Alloutput1}\tag{Partialquotients}
     c_0^{\star}c_1^{\star}c_2^{\star}c_3^{\star}\cdots c^{\star}_{n_k^{\prime}}=\mu(c_0c_1c_2c_3\cdots c_{n_k}).
\end{equation}

By the arguments in \cite{S1}, $n_k^{\prime}$ goes to infinity as $k$ does, moreover,
\begin{equation}\label{equ3}
    \frac{ax+b}{cx+d}=[c_0^{\star};c_1^{\star},\cdots,c^{\star}_{n_k^{\prime}-1},\cdots]
\end{equation}
and the $n_k^{\prime}$th partial quotient following $c^{\star}_{n_k^{\prime}-1}$ is no less than $c^{\star}_{n_k^{\prime}}$.

Now, we give a quantitative estimate about  $c_i$ in (\ref{Alloutput}).
\begin{lemma}\label{Keyle}
Assume $M\in \varepsilon_2$ and $x=[a_0;a_1,a_2,\cdots]>1$. Let  $h$ be  the associated M\"{o}bius transformation and $D=|\text{det}M|\geq 1$.
Suppose $ a_j\leq K$ for some $K\in \N^{+}$. We do the algorithm as  above,
then the  following three claims hold,
\begin{description}
  \item[(i)] For any $n_k< j\leq n_{k+1}-1$, $c_j\leq D-1$
  \item[(ii)] For any $k$, $c_{n_{k+1}}\leq D K$
  \item[(iii)] If for some $k$, $c_{n_{k+1}}\geq D $, then the right upper entry of $M_{k+1}$ must be zero, that is  $M_{k+1}$
has the form
\begin{equation}\label{equ5}
    M_{k+1}=\left(
              \begin{array}{cc}
                \star & 0 \\
                \star & \star \\
              \end{array}
            \right)
\end{equation}
\end{description}

\end{lemma}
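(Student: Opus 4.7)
The plan is to decompose $N_k := M_k\,\Pi_{a_{j_k+1}\cdots a_{j_{k+1}}}$ as $N_k = N_k'\,\Pi_t$, where $N_k' := M_k\,\Pi_{a_{j_k+1}\cdots a_{j_{k+1}-1}}$ lies in $\varepsilon_2$ by the minimality of $j_{k+1}$ and $t := a_{j_{k+1}} \in \{1,\dots,K\}$. Writing $N_k' = \bigl(\begin{smallmatrix} a & b \\ c & d\end{smallmatrix}\bigr)$, inequality (\ref{euq4}) gives $a+b\le D$ and $c+d\le D$, so
\[
  N_k = \begin{pmatrix} at+b & a \\ ct+d & c\end{pmatrix}
\]
has right column $(a,c)^T$ bounded entrywise by $D$. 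To track the factorization $N_k = \Pi_{c_{n_k+1}\cdots c_{n_{k+1}}}\,M_{k+1}$, I would introduce $\tilde N_j := \Pi_{c_{n_k+1}\cdots c_{n_k+j}}^{-1}\,N_k'$ for $0\le j\le L$ with $L := n_{k+1}-n_k$, so that the partially-reduced matrix $N_k^{(j)} := \Pi_{c_{n_k+1}\cdots c_{n_k+j}}^{-1}\,N_k$ equals $\tilde N_j\,\Pi_t$ throughout, with $\tilde N_{j+1} = \Pi_{c_{n_k+j+1}}^{-1}\,\tilde N_j$.

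The main technical step I would prove by induction on $j$ is the invariant that the entries of $\tilde N_j = \bigl(\begin{smallmatrix}\tilde a_j & \tilde b_j\\ \tilde c_j & \tilde d_j\end{smallmatrix}\bigr)$ satisfy $\tilde a_j,\tilde c_j\ge 0$, $\tilde a_j+|\tilde b_j|\le D$ and $\tilde c_j+|\tilde d_j|\le D$ for $0\le j\le L-1$. The base case is (\ref{euq4}); the inductive step uses $(\tilde a_{j+1},\tilde b_{j+1}) = (\tilde c_j,\tilde d_j)$ (so the top row sum is inherited), the formula $(\tilde c_{j+1},\tilde d_{j+1}) = (\tilde a_j-c_{n_k+j+1}\tilde c_j,\,\tilde b_j-c_{n_k+j+1}\tilde d_j)$, the identification $c_{n_k+j+1}=\lfloor\tilde a_j/\tilde c_j\rfloor$ as the common first continued fraction partial quotient, and the determinant identity $\tilde c_{j+1}\tilde d_j-\tilde d_{j+1}\tilde c_j=\pm D$ inherited from $|\det\tilde N_j|=D$.

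Once the invariant is in hand, claim (i) follows: for an interior index the common floor satisfies $c_{n_k+j+1}=\lfloor\tilde a_j/\tilde c_j\rfloor\le\tilde a_j\le D$, and equality $=D$ forces $\tilde a_j=D$, $\tilde c_j=1$, hence $\tilde b_j=0$ by the invariant and $\tilde d_j=\pm 1$ by the determinant identity; in either sub-case the companion floor $\lfloor Dt/(t\pm 1)\rfloor$ fails to equal $D$, contradicting the commonality of the floor. Claim (ii) follows from the invariant at $j=L-1$ combined with $t\le K$: the floor $\lfloor\tilde a_{L-1}/\tilde c_{L-1}\rfloor\le D$, and $\lfloor(\tilde a_{L-1}t+\tilde b_{L-1})/(\tilde c_{L-1}t+\tilde d_{L-1})\rfloor\le \tilde a_{L-1}t+|\tilde b_{L-1}|\le Dt\le DK$, so whichever of Cases~1--3 is operative, $c_{n_{k+1}}\le DK$. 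For claim (iii), from $\tilde N_L = M_{k+1}\Pi_t^{-1}$ one reads $\tilde a_{L-1} = c_{n_{k+1}}\beta+\delta$; combining with $\tilde a_{L-1}\le D$ (invariant at $j=L-1$), $c_{n_{k+1}}\ge D$ either yields $\beta=0$ immediately or forces $\beta=1,\,\delta=0,\,c_{n_{k+1}}=D$; in the latter case $\tilde b_{L-1}=0$ and a direct computation gives $\tilde N_{L-1}=\bigl(\begin{smallmatrix}D & 0\\ 1 & -1\end{smallmatrix}\bigr)$, but the invariant applied to $\tilde N_{L-2}$ together with the positivity of interior partial quotients (for $L\ge 2$) and the non-negativity of $N_k'=\tilde N_0$ (for $L=1$) rule this configuration out. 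Hence $\beta=0$, i.e., the top right entry of $M_{k+1}$ vanishes.

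The hardest part is the inductive propagation of the bound $\tilde c_{j+1}+|\tilde d_{j+1}|\le D$: the sum takes the form $|\tilde a_j-c_{n_k+j+1}\tilde c_j|+|\tilde b_j-c_{n_k+j+1}\tilde d_j|$ with possibly-signed summands, so one must branch on the sign of $\tilde d_j$, on whether $\tilde d_{j+1}$ overshoots zero, and on which of Cases~1--3 of the algorithm selects $c_{n_k+j+1}$. The identity $|\tilde c_{j+1}\tilde d_j-\tilde d_{j+1}\tilde c_j|=D$ combined with $c_{n_k+j+1}=\lfloor\tilde a_j/\tilde c_j\rfloor$ absorbs the residual terms in each sub-case, but the sign chasing is where the bulk of the work sits.
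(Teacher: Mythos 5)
Your setup coincides with the paper's (write $N_k=M_k\Pi_{a_{j_k+1}\cdots a_{j_{k+1}-1}}\cdot\Pi_t$ with the left factor in $\varepsilon_2$ and $t=a_{j_{k+1}}\le K$), but from there you take a different route, and it has a genuine gap: everything hinges on the inductive invariant $\tilde a_j+|\tilde b_j|\le D$, $\tilde c_j+|\tilde d_j|\le D$, whose propagation you never carry out --- you yourself defer the ``sign chasing'' as the bulk of the work. Moreover, one of the two ingredients you name for that induction, and which you also use to deduce (i), is false as stated: the identification $c_{n_k+j+1}=\lfloor\tilde a_j/\tilde c_j\rfloor$ as a \emph{common} partial quotient is only guaranteed by the factorization (\ref{Fac}) when $n\neq 1$, i.e.\ when the block has at least three letters. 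Two-letter blocks do occur: take $M_k=\bigl(\begin{smallmatrix}2&1\\1&3\end{smallmatrix}\bigr)\in\varepsilon_2$ (so $D=5$) and $t=a_{j_{k+1}}=3$ (this arises already at step $0$ for $M=\bigl(\begin{smallmatrix}2&1\\1&3\end{smallmatrix}\bigr)$ and $x=[3;3,3,\dots]$); then $N_k=\bigl(\begin{smallmatrix}7&2\\6&1\end{smallmatrix}\bigr)=\Pi_1\Pi_1\bigl(\begin{smallmatrix}1&1\\5&0\end{smallmatrix}\bigr)$, so the interior letter equals $1$, while $\lfloor\tilde a_0/\tilde c_0\rfloor=\lfloor 2/1\rfloor=2$ and the column ratios $7/6$, $2/1$ share no partial quotient at all. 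What does survive is only the inequality $c_{n_k+j+1}\tilde c_j\le \tilde a_j$ (because $\Pi_{c_{n_k+j+2}\cdots c_{n_{k+1}}}M_{k+1}$ has nonnegative entries), so both your inductive step and your argument for (i) (``equality of the common floor \dots contradicting the commonality'') must be redone in the $n=1$ blocks, and the invariant itself still requires the full case analysis over the signs of $\tilde b_j,\tilde d_j$, the boundary $\tilde c_j=0$, and which of Cases 1--3 produces the letter. As it stands this is a plausible programme, not a proof.

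For comparison, the paper avoids any step-by-step induction: setting $M'=M_k\Pi_{a_{j_k+1}\cdots a_{j_{k+1}-1}}=\bigl(\begin{smallmatrix}\alpha&\beta\\\gamma&\delta\end{smallmatrix}\bigr)\in\varepsilon_2$ and $f=a_{j_{k+1}}$, it analyzes $M'\Pi_f$ in one shot, splitting into $\gamma=0$, $\alpha=0$, and $\alpha,\gamma\ge1$; the interior letters are controlled directly through (\ref{euq4}) (they are partial quotients tied to $\alpha/\gamma$ with $\alpha\le D$, $\gamma\ge1$), and the last letter together with claim (iii) follows from the single value bound $\frac{1}{DK}\le\frac{\alpha f+\beta}{\gamma f+\delta}\le DK$. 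If you want to keep your scheme, you need to prove the invariant honestly, treating the $n=1$ and $\tilde c_j=0$ cases separately; otherwise the one-shot analysis is shorter.
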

\begin{proof}
The three claims are from \cite{S2}. We rewrite the proof here to make the paper more readable.
By the algorithm, we already have
 $M_k\Pi_{a_{j_k+1}a_{j_k+2}\cdots a_{j_{k+1}-1}}\in \varepsilon_2$
and $M_k\Pi_{a_{j_k+1}a_{j_k+2}\cdots a_{j_{k+1}}}\in \mathcal{D}_2\cup\mathcal{D}_2^{\prime}$.

For simplicity, let  $M^\prime=\left(
                          \begin{array}{cc}
                            \alpha & \beta\\
                            \gamma & \delta \\
                          \end{array}
                        \right)=
M_k\Pi_{a_{j_k+1}a_{j_k+2}\cdots a_{j_{k+1}-1}}\in \varepsilon_2$ and $f=a_{j_{k+1}}\leq K$. Then
$M^\prime\Pi_f\in \mathcal{D}_2\cup\mathcal{D}_2^{\prime}$.

If $\gamma=0$, then
\begin{equation*}
    M^\prime\Pi_f=\left(
                    \begin{array}{cc}
                      \alpha f+\beta & \alpha \\
                      \delta & 0 \\
                    \end{array}
                  \right)\in \mathcal{D}_2\cup\mathcal{D}_2^{\prime}
\end{equation*}
and we must have $\alpha f+\beta\geq \delta$.
Thus
\begin{equation*}
    M^\prime\Pi_f=\left(
                    \begin{array}{cc}
                      \alpha f+\beta & \alpha \\
                      \delta & 0 \\
                    \end{array}
                  \right)=
\left(
                    \begin{array}{cc}
                      \lfloor\frac{\alpha f+\beta}{\delta}\rfloor & 1 \\
                      1 & 0 \\
                    \end{array}
                  \right)\left(
                    \begin{array}{cc}
                     \delta  & 0 \\
                      (\alpha f+\beta)\text{ mod } \delta & \alpha\\
                    \end{array}
                  \right).
\end{equation*}
In this case, in order to prove the Lemma, it suffices to show that
\begin{equation}\label{equ6}
    \lfloor\frac{\alpha f+\beta}{\delta}\rfloor\leq  DK.
\end{equation}

Otherwise, one has
\begin{equation}\label{equ71}
   DK +1\leq \lfloor\frac{\alpha f+\beta}{\delta}\rfloor=  \lfloor\frac{\alpha f}{\delta}+\frac{\beta}{\delta}\rfloor \leq \lfloor\frac{\alpha K}{\delta}+\frac{\beta}{\delta}\rfloor,
\end{equation}
since $f\leq K$.

By the fact $M^\prime=\left(
                          \begin{array}{cc}
                            \alpha & \beta\\
                            0 & \delta \\
                          \end{array}
                        \right)\in \varepsilon_2$, we have $\beta< \delta$, $|\alpha|+|\beta|\leq D$. This is contradicted to (\ref{equ71}).

If $\alpha=0$, then
\begin{equation*}
    M^\prime\Pi_f=\left(
                    \begin{array}{cc}
                      \beta & 0 \\
                      \gamma f+\delta & \gamma \\
                    \end{array}
                  \right)\in \mathcal{D}_2\cup\mathcal{D}_2^{\prime}
\end{equation*}
and we must have $\gamma f+\delta\geq \beta$.
Thus
\begin{equation*}
    M^\prime\Pi_f=\left(
                    \begin{array}{cc}
                      0 & 1 \\
                      1 & 0 \\
                    \end{array}
                  \right)
\left(
                    \begin{array}{cc}
                      \lfloor\frac{\gamma f+\delta}{\beta}\rfloor & 1 \\
                      1 & 0 \\
                    \end{array}
                  \right)\left(
                    \begin{array}{cc}
                     \beta  & 0 \\
                      (\gamma f+\delta)\text{ mod } \delta & \gamma\\
                    \end{array}
                  \right).
\end{equation*}
In this case, we can still prove the Lemma like the case $\gamma=0$.

If $\alpha, \gamma\geq 1$, then
\begin{equation*}
    M^\prime\Pi_f=\left(
                    \begin{array}{cc}
                      \alpha f+\beta & \alpha \\
                      \gamma f+\delta & \gamma \\
                    \end{array}
                  \right)\in \mathcal{D}_2\cup\mathcal{D}_2^{\prime}.
\end{equation*}
By the  algorithm, $n_k\leq j\leq n_{k+1}-1$, $c_j$ is the common partial quotient of $ \frac{\alpha}{\gamma}$ and $\frac{\alpha f+\beta}{\gamma f+\delta}$.

 We first show claim 1 holds.  Indeed,   $ \alpha\leq D$ and $\gamma\geq 1$.
If $\alpha=D$ and $\gamma=1$, we must have $\beta=0$ and $\delta=1$. This implies claim 1 when we consider the partial quotient of $\frac{\alpha f+\beta}{\gamma f+\delta}$.
Otherwise ($\alpha=D$ and $\gamma=1$ do not hold)  claim 1 holds if we consider the partial quotient of $ \frac{\alpha}{\gamma}$.

Suppose the last letter, i.e. $c_{n_{k+1}}\geq D$, then we must have  $\frac{a}{c}=[c_{j_k+1};c_{j_k+2},c_{j_k+2},\cdots,c_{j_{k+1}-1}]$ by the (Case1-Case3) and
$c_{n_{k+1}}\geq D$ is the
$n_{k+1}-n_{k}+1$th partial quotient of $\frac{\alpha f+\beta}{\gamma f+\delta}$.
This implies claims 2 and 3 if we can show
\begin{equation*}
   \frac{1}{DK}\leq \frac{\alpha f+\beta}{\gamma f+\delta}\leq  DK.
\end{equation*}
We only prove the fact  $\frac{\alpha f+\beta}{\gamma f+\delta}\leq  DK$, the proof of lower bound $\frac{1}{DK}\leq \frac{\alpha f+\beta}{\gamma f+\delta}$ is the same.

If $\gamma f+\delta\geq 2$, then $   \frac{\alpha f+\beta}{\gamma f+\delta}\leq \frac{DK+D}{2}\leq DK$.
If $\gamma f+\delta\leq 1$, then we have $ \delta=0$ and $\gamma=K=1$. This implies $\beta=D$ and $\alpha=0$. We still have
$\frac{\alpha f+\beta}{\gamma f+\delta}\leq  DK$.

\end{proof}
\section{ Some Lemmas}
We say a M\"{o}bius transformation $h(\cdot)=M\cdot $ can not change the continued fraction  eventually, if for any $x$,
there exists some $N\in\N$ such that the $n$th partial quotients of $h(x)$ and $x$ are the same for any $n\geq N$.
\begin{lemma}\label{keyle2}
The following forms of M\"{o}bius transformations can not change the continued fraction  eventually,
\begin{equation}\label{mar}
   S=\{ \left(
      \begin{array}{cc}
        1 & k_1 \\
        0 & 1 \\
      \end{array}
    \right),
     \left(
      \begin{array}{cc}
        k_2 & 1 \\
        1 & 0 \\
      \end{array}
    \right), \left(
      \begin{array}{cc}
        1 & 0 \\
        k_3 & 1 \\
      \end{array}
    \right),
    \left(
      \begin{array}{cc}
        -1 & 0 \\
        0 & 1 \\
      \end{array}
    \right),
    \left(
      \begin{array}{cc}
        1 & 0 \\
        0 & -1 \\
      \end{array}
    \right),
    \left(
      \begin{array}{cc}
        0 & 1 \\
        1 & 0 \\
      \end{array}
    \right)\},
\end{equation}
where $k_1,k_2,k_3\in\Z$.
\end{lemma}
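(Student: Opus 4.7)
The strategy is to verify the claim for each of the six generators in $S$ individually, by computing $h(x)$ explicitly and reading off its continued fraction expansion from that of $x$. For every matrix in $S$ one finds that the partial quotient sequence of $h(x)$ coincides with the tail of the sequence of $x$ beyond some bounded initial segment (possibly up to a shift of index by at most two positions), which suffices for the intended application to Theorem~\ref{maintheorem}. Since $S$ consists of generators of $\mathrm{GL}_{2}(\mathbb{Z})$, a by-product is the classical Serret-type statement that $\mathrm{GL}_{2}(\mathbb{Z})$-equivalent numbers share an eventual tail.

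For the four matrices not involving a sign reflection, the verification is a short manipulation based on the identity $1/[a_{0};a_{1},\dots]=[0;a_{0},a_{1},\dots]$ valid whenever $a_{0}\geq 1$. For $\bigl(\begin{smallmatrix}1&k_{1}\\0&1\end{smallmatrix}\bigr)$ we have $h(x)=x+k_{1}=[a_{0}+k_{1};a_{1},a_{2},\dots]$, which alters only the $0$th partial quotient. For $\bigl(\begin{smallmatrix}k_{2}&1\\1&0\end{smallmatrix}\bigr)$ we have $h(x)=k_{2}+1/x$; whenever $x>1$ this yields $h(x)=[k_{2};a_{0},a_{1},a_{2},\dots]$. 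For $\bigl(\begin{smallmatrix}0&1\\1&0\end{smallmatrix}\bigr)$ the map is $h(x)=1/x=[0;a_{0},a_{1},\dots]$ when $x>1$. For $\bigl(\begin{smallmatrix}1&0\\k_{3}&1\end{smallmatrix}\bigr)$ we have $h(x)=1/(k_{3}+1/x)$, which two applications of the same identity place into standard form when $k_{3}\geq 1$; negative $k_{3}$ is reduced to this case by precomposition with a suitable translation (already handled). Since every infinite continued fraction eventually satisfies $x>1$ in the sense of its forward orbit under the Gauss map, enlarging $N$ absorbs the finitely many exceptions.

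The remaining two matrices $\bigl(\begin{smallmatrix}-1&0\\0&1\end{smallmatrix}\bigr)$ and $\bigl(\begin{smallmatrix}1&0\\0&-1\end{smallmatrix}\bigr)$ both produce $h(x)=-x$, so they are handled by the single formula
\begin{equation*}
-[a_{0};a_{1},a_{2},a_{3},\dots]=\begin{cases}[-a_{0}-1;\,1,\,a_{1}-1,\,a_{2},\,a_{3},\dots] & \text{if } a_{1}\geq 2,\\ [-a_{0}-1;\,a_{2}+1,\,a_{3},\,a_{4},\dots] & \text{if } a_{1}=1,\end{cases}
\end{equation*}
which follows from $x=a_{0}+1/[a_{1};a_{2},\dots]$ by purely algebraic manipulation (write $-x=-a_{0}-1+1/[a_{1};a_{2},\dots]^{\ast}$ and collapse the resulting leading $0$ when $a_{1}=1$). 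In either subcase the continued fraction of $-x$ differs from that of $x$ only in the first two or three entries.

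The main obstacle is the case bookkeeping required to keep the intermediate integer parts of the correct sign and to prevent any $0$ from appearing in the middle of the expansion; this arises in the $a_{1}=1$ branch of the sign-change formula and in the treatment of negative $k_{3}$ in the lower-triangular case. Both are disposed of by a finite case split on the first few partial quotients. After these adjustments, every $M\in S$ changes the continued fraction of $x$ in at most three positions modulo a shift by at most two, so the tails match and the lemma follows.
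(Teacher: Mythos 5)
Your proposal is correct and takes essentially the same route as the paper, whose entire proof is the sentence ``the proof is based on direct computation''; your explicit case-by-case formulas (translation, reciprocal for $x>1$, and the sign-change identity $-[a_0;a_1,a_2,\dots]$) supply exactly that computation. The one soft spot is the appeal to the Gauss-map orbit to absorb the cases $x\le 1$, which as stated is not a real reduction; it is cleaner to observe that eventual tail-equality is transitive and that every matrix in $S$ is a composition of the translation, negation and $x\mapsto 1/x$ cases you have already verified for all real $x$.
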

\begin{proof}
The proof is based on direct computation.
\end{proof}
{\bf Remark}{: The determinant of each matrix in $S$ is $\pm 1$.}
\begin{lemma}\label{keyle3}
Assume $a,b,c,d\in \Z$ and $ad-bc\neq 0$, then $M=\left(
                                                \begin{array}{cc}
                                                  a & b \\
                                                  c & d \\
                                                \end{array}
                                              \right)
$ can be rewritten in the following form
\begin{equation}\label{form}
    M=S_1S_2\cdots S_n M^{\prime}
\end{equation}
with $M^{\prime}\in \varepsilon_2$. Moreover if $D=\text{det} M=1$, then $M^{\prime}$ can be $\left(
                                                                                   \begin{array}{cc}
                                                                                     1 & 0 \\
                                                                                     0 & 1 \\
                                                                                   \end{array}
                                                                                 \right)
$.
\end{lemma}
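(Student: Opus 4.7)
The plan is to reduce $M$ to a matrix in $\varepsilon_2$ by left multiplying $M$ by a finite sequence of elements of $S$, and then to invert to obtain the desired right factorization. Each matrix I will use is either an involution (the three sign-change/swap matrices) or a translation $\begin{pmatrix}1 & k\\ 0 & 1\end{pmatrix}$ whose inverse $\begin{pmatrix}1 & -k\\ 0 & 1\end{pmatrix}$ again lies in $S$, so if $M' = T_m \cdots T_1 M$ with $T_i \in S$, one gets $M = T_1^{-1}\cdots T_m^{-1} M' = S_1 \cdots S_n M'$ with $S_i \in S$.

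For the reduction itself I would run a Euclidean algorithm on the left column. First, if the bottom-left entry $c$ is negative, left multiply by $\begin{pmatrix}1 & 0\\ 0 & -1\end{pmatrix}$ to replace $c$ by $-c \geq 0$. Then, while $c > 0$, left multiply by $\begin{pmatrix}1 & -\lfloor a/c\rfloor\\ 0 & 1\end{pmatrix}$ to reduce the top-left entry to $a \bmod c \in [0, c)$, and then by $\begin{pmatrix}0 & 1\\ 1 & 0\end{pmatrix}$ to swap rows. The new bottom-left is the old $a \bmod c$, which is strictly less than the previous $c$, so $c$ strictly decreases and after finitely many iterations becomes $0$. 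At that point, the matrix has the form $\begin{pmatrix}a & b\\ 0 & d\end{pmatrix}$ with $ad \neq 0$; apply sign-change matrices to force $a, d > 0$, and one last translation $\begin{pmatrix}1 & -\lfloor b/d\rfloor\\ 0 & 1\end{pmatrix}$ to bring $b$ into $[0, d)$. The result $M'$ lies in $M_{2,\N}$, has $ad - 0 = ad \neq 0$, and satisfies $(a-0)(b-d) = a(b-d) < 0$, so $M' \in \varepsilon_2$.

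For the moreover clause, assume $\det M = 1$. Since each $S_i$ has determinant $\pm 1$, $\det M' = \pm 1$. A short case analysis of $\varepsilon_2$ matrices satisfying $ad - bc = \pm 1$, using the strict inequality $(a-c)(b-d) < 0$ and the nonnegativity of entries, splits into the two subcases $a > c, b < d$ and $a < c, b > d$ and in each forces all but two of $a, b, c, d$ to vanish and the remaining pair to equal $1$. The only solutions are $M' = I$ (for determinant $+1$) and $M' = \begin{pmatrix}0 & 1\\ 1 & 0\end{pmatrix}$ (for determinant $-1$). In the second case, absorb the swap matrix into the $S$-product to end with $M' = I$, as desired. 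The main subtlety is this final enumeration, which relies on the strict sign condition $(a-c)(b-d) < 0$ to rule out matrices like $\begin{pmatrix}1 & 1\\ 0 & 1\end{pmatrix}$ that have determinant $1$ and nonnegative entries but sit on the boundary $(a-c)(b-d) = 0$ and so fall outside $\varepsilon_2$.
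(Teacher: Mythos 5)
Your proposal is correct and follows essentially the same route as the paper: a Euclidean reduction of the first column by left multiplication with matrices from $S$ (shears, swap, sign changes), ending in an upper triangular matrix whose off-diagonal entry is reduced modulo $d$ to land in $\varepsilon_2$, with inverses of the $S$-factors again in $S$. Your treatment of the $\det M=1$ case via classifying the $\varepsilon_2$ matrices of determinant $\pm 1$ (only $I$ and the swap) is a slight variant of the paper's direct observation that $a_1|d_1|=1$ forces the reduced matrix to be the identity, but both arguments are sound.
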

\begin{proof}
Using M\"{o}bius transformation $\left(
         \begin{array}{cc}
           -1 & 0 \\
           0 & 1 \\
         \end{array}
       \right)
\in S$ and $\left(
        \begin{array}{cc}
          1 & 0\\
          0 & -1 \\
        \end{array}
      \right)\in S$, we can assume $a,c\geq 0$.

      Using  M\"{o}bius transformation $\left(
         \begin{array}{cc}
           1 & 0 \\
           k & 1 \\
         \end{array}
       \right)
\in S$ and $\left(
        \begin{array}{cc}
          0 & 1\\
          1 & 0 \\
        \end{array}
      \right)\in S$, $M$ can be  changed to $M_1=\left(
                                                   \begin{array}{cc}
                                                     a_1 & b_1 \\
                                                     0 & d_1\\
                                                   \end{array}
                                                 \right)
      $ with $a_1\geq 1.$

      Using  M\"{o}bius transformation $\left(
         \begin{array}{cc}
           1 & 0 \\
            & -1 \\
         \end{array}
       \right)
\in S$ and $\left(
        \begin{array}{cc}
          1& k\\
          0 & 1 \\
        \end{array}
      \right)\in S$, $M_1$ can be  changed to $M^{\prime}=\left(
                                                   \begin{array}{cc}
                                                     a_1 & b_1\text{ mod }|d_1| \\
                                                     0 & |d_1|\\
                                                   \end{array}
                                                 \right)\in \varepsilon_2
      $.

Moreover, if $D=1$, we must have $a_1=1,|b_1|=1$ and $b_1\text{ mod }|d_1| =0$.

\end{proof}

{\bf Remark}: If $|\text{det}M|=1$, then  the associated  M\"{o}bius transformations can not change the continued fraction  eventually.

\begin{lemma}\label{keyle6}
Let $M\in \varepsilon_2$ and $D=|\text{det} M|\geq 2$. Let $x=[a_0;a_1,a_2,\cdots]$ such that $B_1\leq a_j\leq B_2$ for all $j\geq 0$.
Using the Algorithm in section 2, we get a sequence  $  c_0^{\star}c_1^{\star}c_2^{\star}c_3^{\star}\cdots$ by (\ref{Alloutput1}).
If $c_0^{\star}=0$, then
\begin{equation}\label{equ91}
 c_1^{\star}\leq   \lfloor D y_0\rfloor
\end{equation}
where $y_0=[B_2;B_1,B_2,B_1,\cdots]\triangleq[\overline{B_2,B_1}]=\frac{B_1B_2+\sqrt{B_1^2B_2^2+4B_1B_2}}{2B_1}$.
Moreover, the equality in (\ref{equ91}) holds iff $a=0,b=1,c=D$ and $d=0$.

In addition,  assume  $M\neq \left(
                                                                                                        \begin{array}{cc}
                                                                                                          0 & 1 \\
                                                                                                          D & 0 \\
                                                                                                        \end{array}
                                                                                                      \right)
$, then
\begin{equation}\label{equ10}
 c_1^{\star}\leq  \max\{\lfloor \frac{D}{4}y_0+1\rfloor, D-1\}
\end{equation}
if $c_0^{\star}=0$.
\end{lemma}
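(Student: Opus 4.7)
The plan is to reduce to upper-bounding the rational function
\[
g(x) \;:=\; \frac{cx+d}{ax+b} \;=\; \frac{1}{h(x)}.
\]
Since $c_0^\star = 0$, equation~(\ref{equ3}) gives $0 \leq h(x) < 1$, hence $c_1^\star = \lfloor g(x) \rfloor$. The hypothesis $B_1 \leq a_j \leq B_2$ confines $x$ to $[x_{\min}, y_0]$ with $x_{\min} = [\overline{B_1, B_2}]$ and $y_0 = [\overline{B_2, B_1}]$. From $M \in \varepsilon_2$ we get either Case~(A) $a > c$, $d > b$, or Case~(B) $a < c$, $b > d$, and (\ref{euq4}) forces $a+b \leq D$, $c+d \leq D$. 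A direct check shows $\det M > 0$ in Case~(A) and $\det M < 0$ in Case~(B), so since $g'(x) = (bc-ad)/(ax+b)^2$, the function $g$ is monotonically decreasing in Case~(A) (maximum at $x_{\min}$) and monotonically increasing in Case~(B) (maximum at $y_0$).

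For the main bound $c_1^\star \leq \lfloor D y_0 \rfloor$, the extremal subcase is Case~(B) with $a = d = 0$, in which $bc = D$, $g(y_0) = (c/b)\, y_0 \leq D y_0$, with equality iff $b = 1$ and $c = D$---exactly the matrix $M = \left(\begin{smallmatrix}0 & 1 \\ D & 0\end{smallmatrix}\right)$. In every other admissible configuration (Case~(A); Case~(B) with $a \geq 1$; Case~(B) with $a = 0$ but $d \geq 1$), substitution at the maximizing endpoint, together with the inequalities $a+b,\, c+d \leq D$ and $y_0 \geq 1$, yields $g(x) < D y_0$ strictly.

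For the refined estimate, excluding the extremal matrix removes the only configuration in which $g$ can attain order $Dy_0$; the remaining cases give improvements as follows. In Case~(A) one has $a \geq 1$, $d \geq 1$, $x_{\min} > 1$, so $g(x_{\min}) \leq c + d/x_{\min} < c + d \leq D$, yielding $c_1^\star \leq D - 1$. In Case~(B) with $a = 0$, the exclusion forces $b \geq 2$, and directly $g(y_0) = Dy_0/b^2 + d/b \leq \frac{D}{4} y_0 + 1$. In Case~(B) with $a \geq 1$, the inequalities $a y_0 + b \geq y_0 + 1$ and $c y_0 + d \leq D y_0$ give $g(y_0) \leq Dy_0/(y_0+1)$; an elementary calculation shows this is $\leq D-1$ when $y_0 \leq D - 1$ and $\leq \frac{D}{4} y_0 + 1$ when $y_0 > D-1$. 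Combining the three cases yields $c_1^\star \leq \max\{\lfloor \frac{D}{4} y_0 + 1 \rfloor,\, D - 1\}$.

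The main obstacle is the exhaustive bookkeeping across subcases with vanishing entries, and in particular verifying the sharp drop from $D y_0$ down to $\frac{D}{4} y_0 + O(1)$ once the single exceptional matrix is excluded. This drop ultimately comes from the elementary fact that forbidding $M = \left(\begin{smallmatrix}0 & 1 \\ D & 0\end{smallmatrix}\right)$ forces $b \geq 2$ in the critical Case~(B), $a = 0$ subcase, producing the factor $1/b^2 \leq 1/4$ in the leading term.
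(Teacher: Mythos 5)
Your core reduction is the same as the paper's: from $c_0^{\star}=0$ you pass to $c_1^{\star}=\lfloor (cx+d)/(ax+b)\rfloor$ and extremize over $x\in[x_{\min},y_0]$, and the decisive mechanism---excluding $M=\left(\begin{smallmatrix}0&1\\ D&0\end{smallmatrix}\right)$ forces $b\geq 2$ when $a=0$, whence the factor $D/b^{2}\leq D/4$ in $\frac{D}{b^{2}}y_0+\frac{d}{b}$---is exactly the paper's argument. Your organizational extras (splitting $\varepsilon_2$ by the sign of $\det M$, monotonicity of $g$ with endpoint evaluation, and the separate $Dy_0/(y_0+1)$ analysis with the dichotomy $y_0\lessgtr D-1$ when $a\geq1$, $c>a$) are correct but unnecessary: the paper disposes of every matrix with $a\geq 1$ at once via $\frac{cx+d}{ax+b}<\frac{c+d}{a}\leq D$, using only $x>1$ and (\ref{euq4}), so no monotonicity or lower endpoint $x_{\min}$ is needed.

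The one genuine soft spot is your justification of the equality characterization. Showing $g(x)<Dy_0$ strictly for all non-exceptional $M$ does not rule out $\lfloor g(x)\rfloor=\lfloor Dy_0\rfloor$, since the two real numbers can share the same integer part; what is needed is an integer gap, i.e. $c_1^{\star}\leq\lfloor Dy_0\rfloor-1$ in those cases. Your refined bounds do furnish this, but you never close the loop: one must still check that
$\max\{\lfloor \tfrac{D}{4}y_0+1\rfloor,\,D-1\}\leq\lfloor Dy_0\rfloor-1$,
which holds because $D\geq 2$ and $y_0\geq\frac{1+\sqrt5}{2}$ give $Dy_0-(\tfrac{D}{4}y_0+1)=\tfrac{3D}{4}y_0-1>1$ and $\lfloor Dy_0\rfloor\geq D$. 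This is precisely the comparison the paper makes ($\lfloor\tfrac{D}{4}y_0+1\rfloor\leq\lfloor Dy_0\rfloor-1$ ``since $y_0\geq\frac{\sqrt5+1}{2}$ and $D\geq2$''); add that one line and your proof of the ``equality only for $\left(\begin{smallmatrix}0&1\\ D&0\end{smallmatrix}\right)$'' claim is complete.
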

\begin{proof}
Let
\begin{equation*}
    \frac{p_{n}}{q_n}=[a_0;a_1,a_2,\cdots,c_n],
\end{equation*}
then
\begin{equation*}
    \Pi_{a_0a_1\cdots a_n}=\left(
                             \begin{array}{cc}
                               p_n & p_{n-1} \\
                               q_n & q_{n-1} \\
                             \end{array}
                           \right).
\end{equation*}
Thus we have the following simple facts
\begin{equation}\label{equ10}
   M \Pi_{a_0a_1\cdots a_n}=\left(
                              \begin{array}{cc}
                                ap_n+bq_{n} &  ap_{n-1}+bq_{n-1} \\
                                cp_n+dq_{n} & cp_{n-1}+dq_{n-1} \\
                              \end{array}
                            \right),
\end{equation}
and
\begin{equation*}
     \lim_{n\to \infty}\frac{ap_n+bq_{n}}{cp_n+dq_{n}}=\frac{ap_{n-1}+bq_{n-1}}{cp_{n-1}+dq_{n-1}}=\frac{ax+b}{cx+d}.
\end{equation*}

If $ c_0^{\star}=0$, then $ c_1^{\star}$ is the second common partial quotient of $\frac{ap_n+bq_{n}}{cp_n+dq_{n}} $ and $\frac{ap_{n-1}+bq_{n-1}}{cp_{n-1}+dq_{n-1}}$ for any large $n$. Combining with (\ref{equ10}),
we must have
\begin{equation}\label{equ11}
    c_1^{\star}=\lfloor\frac{cx+d}{ax+b}\rfloor.
\end{equation}
Now we are in a position to prove the Lemma, based on (\ref{equ11}).

Case 1: $a\geq  1$

Using $x> 1$, one has
\begin{eqnarray*}
  \frac{cx+d}{ax+b} &\leq & \frac{cx+d}{ax} \\
   &< & \frac{c+d}{a} \\
   &\leq& D
\end{eqnarray*}
where the third inequality holds by (\ref{euq4}). This implies $ c_1^{\star}\leq D-1$.

Case 2: $a= 0$

In this case, we have
 $b>d$, $bc=D$ and $c+d\leq D$ by $M\in \varepsilon_2$, and
 \begin{equation}\label{equ12}
    c_1^{\star}=\lfloor\frac{D}{b^2}x+\frac{d}{b}\rfloor.
\end{equation}

If $ b\geq 2$,  by (\ref{equ12}), one has
\begin{equation*}
     c_1^{\star}\leq \lfloor\frac{D}{4}x+1\rfloor.
\end{equation*}
Notice that if a real number   with bounded partial quotients  in $[B_1,B_2]\cap \Z$
is such that $x\leq y_0$, then
\begin{equation*}
     c_1^{\star}\leq \lfloor\frac{D}{4}y_0+1\rfloor \leq \lfloor Dy_0\rfloor-1,
\end{equation*}
since $y_0\geq \frac{\sqrt{5}+1}{2}$ and $D\geq 2$.

If $b=1$, we must have $c=D$ and $d=0$.

Putting all the cases together, we complete the proof.
\end{proof}

\begin{lemma}\label{keyle7}
Let $M\in \varepsilon_2$ with the form $ \left(
                                         \begin{array}{cc}
                                           a & 0 \\
                                           c & d \\
                                         \end{array}
                                       \right)
$ and $D=|\text{det} M|\geq 1$. Let $x=[a_0;a_1,a_2,\cdots]$ such that $B_1\leq a_j\leq B_2$ for all $j\geq 0$.
Applying the Algorithm  in section 2 to $M\cdot x$, we get a sequence  $  c_0^{\star}c_1^{\star}c_2^{\star}c_3^{\star}\cdots$ by (\ref{Alloutput1}).
If $c_0^{\star}=0$, we must have
\begin{equation*}
 c_1^{\star}\leq    \lfloor\frac{D}{x_0}\rfloor,
\end{equation*}
where $x_0=[B_1;B_2,B_1,B_2,\cdots]\triangleq[\overline{B_1,B_2}]=\frac{B_2B_1+\sqrt{B_1^2B_2^2+4B_1B_2}}{2B_2}$.
\end{lemma}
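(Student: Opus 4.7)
The strategy is to mirror the proof of Lemma~\ref{keyle6}, exploiting the simplification $b=0$. The convergent-tracking computation that produced equation~\eqref{equ11} applies verbatim here, so the hypothesis $c_0^\star = 0$ yields
\[
c_1^\star \;=\; \left\lfloor \frac{cx+d}{ax}\right\rfloor.
\]
The right-hand side, which equals $c/a + d/(ax)$, is a strictly decreasing function of $x$. Among real numbers whose partial quotients all lie in $[B_1,B_2]$, the infimum is precisely $x_0=[\overline{B_1,B_2}]$ (attained in the limit by the alternating expansion), so $x\geq x_0$ and hence
\[
c_1^\star \;\leq\; \left\lfloor \frac{cx_0+d}{ax_0}\right\rfloor.
\]

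By monotonicity of $\lfloor\cdot\rfloor$, it therefore suffices to establish
\[
\frac{cx_0+d}{ax_0} \;\leq\; \frac{D}{x_0} \;=\; \frac{ad}{x_0},
\]
equivalently $cx_0\leq (a^2-1)d$. The case $c=0$ is immediate. In the remaining case $c\geq 1$, the condition $M\in\varepsilon_2$ combined with $b=0$ and $d\geq 1$ forces $(a-c)d>0$, hence $a>c$; in particular $a\geq 2$ and $c\leq a-1$. The hypothesis $c_0^\star=0$ means $h(x)=ax/(cx+d)<1$, i.e.\ $(a-c)x<d$; paired with $x\geq x_0$ this gives $(a-c)x_0<d$. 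Multiplying by $a^2-1$ produces $(a^2-1)(a-c)x_0<(a^2-1)d$, so it is enough to check the integer inequality $c\leq(a^2-1)(a-c)$. Factoring $a^2-1=(a-1)(a+1)$ and using $a-c\geq 1$, one has $(a-1)(a+1)(a-c)\geq a-1\geq c$, which completes the argument.

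The only delicate point, rather than a serious obstacle, is noticing that the hypothesis $c_0^\star=0$ secretly provides the complementary upper bound $x<d/(a-c)$. Once this is combined with the lower bound $x\geq x_0$ and the $\varepsilon_2$ inequality $c+d\leq D=ad$ (which yields $c\leq a-1$), the target inequality $cx_0\leq(a^2-1)d$ drops out with a single multiplication by $a^2-1$.
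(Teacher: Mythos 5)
Your proof is correct, but the core step is handled differently from the paper. Both arguments start the same way: from (\ref{equ11}) with $b=0$ you get $c_1^{\star}=\lfloor\frac{cx+d}{ax}\rfloor$, use monotonicity in $x$ together with $x\geq x_0=[\overline{B_1,B_2}]$ to reduce to bounding $\frac{cx_0+d}{ax_0}$, and aim at $\frac{cx_0+d}{ax_0}\leq\frac{D}{x_0}$. The paper then proceeds by a case analysis on the size of $D$ relative to $x_0$ (the cases $D\geq 2x_0$, $x_0\leq D<2x_0$, $D<x_0$), using only $c\leq a-1$ and $d=D/a$; in the middle case it does not prove the displayed inequality itself but only the weaker statement needed after taking floors ($\frac{cx_0+d}{ax_0}<2\leq 1+\frac{D}{x_0}$). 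You instead extract an extra piece of information from the hypothesis $c_0^{\star}=0$, namely $h(x)=\frac{ax}{cx+d}<1$, i.e.\ $(a-c)x<d$, hence $(a-c)x_0<d$ since $a>c$; combined with $c\leq a-1$ this turns the target into the purely algebraic inequality $cx_0\leq(a^2-1)d$, verified by $c\leq a-1\leq(a^2-1)(a-c)$, with no case analysis on $D$ and no use of the explicit value or size of $x_0$. This is a genuine simplification: it is uniform in $D$ and proves the pointwise inequality (\ref{equ14}) itself, whereas the paper's route avoids invoking the hypothesis $c_0^{\star}=0$ beyond formula (\ref{equ13}) but pays with three cases. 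One small caveat: in your closing summary you attribute $c\leq a-1$ to the inequality $c+d\leq D=ad$ from (\ref{euq4}), which by itself only gives $c\leq(a-1)d$; the correct derivation, which you do give in the body of your argument, is that $\varepsilon_2$ with $b=0$ forces $(a-c)d>0$, hence $d\geq 1$ and $a>c$.
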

\begin{proof}
Let $b=0$ in (\ref{equ11}), then we get
\begin{equation}\label{equ13}
    c_1^{\star}=\lfloor\frac{cx+d}{ax}\rfloor.
\end{equation}
Notice that if  a real number   with bounded partial quotients  in $[B_1,B_2]\cap \Z$
is such that $x\geq  x_0$, then
\begin{equation}\label{equ13}
    c_1^{\star}\leq \lfloor\frac{cx_0+d}{ax_0}\rfloor.
\end{equation}
Thus in order to prove this Lemma,  it suffices to
show
\begin{equation}\label{equ14}
   \frac{cx_0+d}{ax_0}\leq \frac{D}{x_0}.
\end{equation}.

If $a=1$, we must have $c=0$ and $d=D$,
this implies (\ref{equ14}).

If $a\geq 2$, we already have $ ad=D$ and $c\leq a-1$.

Case 1: $D\geq 2x_0>2$

One has
\begin{eqnarray*}
  c x_0+d &\leq & (a-1)x_0+\frac{D}{2} \\
   &\leq &  \frac{D(a-1)}{2}+\frac{D}{2} \\
  &\leq & Da
\end{eqnarray*}
This implies (\ref{equ14}).

Case 2: $x_0\leq D<2 x_0$

 It suffices to
show
\begin{equation}\label{equ15}
   \frac{cx_0+d}{ax_0}<2.
\end{equation}

This is obvious by the following computation,\begin{eqnarray*}
                                            c x_0+d &\leq & (a-1)x_0+ D \\
   &< &  a x_0+2x_0\\
  &\leq  &  2ax_0
                                             \end{eqnarray*}
This implies (\ref{equ15}).

Case 3: $D<x_0$

By direct computation,
\begin{eqnarray*}
  \frac{cx_0+d}{ax_0} &=& \frac{c}{a}+\frac{D}{a^2x_0} \\
   &< & \frac{a-1}{a}+\frac{1}{a^2}\\
   &< & 1 .
\end{eqnarray*}
This also implies (\ref{equ14}).

\end{proof}
\section{Proof of  Theorem \ref{maintheorem}}
{\bf{Proof  of Theorem \ref{maintheorem}}}:
\begin{proof}
Suppose $x=[a_0;a_1,a_2,\cdots]$  is such that
$B_1\leq a_j\leq B_2$ for $j\geq j_0$, and $M=\left(
                                           \begin{array}{cc}
                                             a & b \\
                                             c & d \\
                                           \end{array}
                                         \right)
$ is such that $D=|\text{det}M|\geq 1$.
By Lemmas \ref{keyle2} and \ref{keyle3}, we may assume $M\in \varepsilon_2$.
By the fact
\begin{equation}\label{equ7}
    h(x)=M\cdot x=M\Pi_{a_0a_1\cdots a_{j_0}}\cdot[a_{j_0+1};a_{j_0+2},\cdots]
\end{equation}
combining with (\ref{Fac}), in order to prove Theorem \ref{maintheorem}, we only need to prove the case
  when  all the partial quotients of $x$ satisfy $B_1\leq a_i\leq B_2$.

By the Algorithm, it suffices to show that
for any word $k_10k_20\cdots 0k_p$ in (\ref{Alloutput}) with $k_i\in\N^+, i=1,2,\cdots,p$, we have
\begin{equation}\label{equ9}
    k_1+k_2+\cdots+k_p\leq \lfloor\frac{D-1}{B_1}\rfloor +\lfloor D \frac{B_1B_2+\sqrt{B_1^2B_2^2+4B_1B_2}}{2B_1}\rfloor.
\end{equation}

Assume $k_1$ is the last letter of $k$th  step (\ref{Alloutput}). Then the output of $k+1$th step is $0k_2$, $k+2$th step is $0k_3$, $\cdots$.

\textbf{Case 1:} $ k_1\geq D$

By  (iii) of Lemma \ref{Keyle}, $M_{k+1}$ has the form
\begin{equation*}
    M_{k+1}=\left(
              \begin{array}{cc}
                a_{k} & 0 \\
                c_k & d_k \\
              \end{array}
            \right)\in \varepsilon_2.
\end{equation*}

By Lemma \ref{keyle7}, we have
\begin{equation*}
    \sum_{j=2}^pk_j\leq \lfloor\frac{D}{x_0}\rfloor.
\end{equation*}
By (ii) of Lemma \ref{Keyle}, $k_1\leq DB_2$,
then
\begin{eqnarray*}
  \sum_{j=1}^pk_j &\leq& \lfloor\frac{D}{x_0}\rfloor+DB_2 \\
   &\leq&  \lfloor D \frac{B_2B_1+\sqrt{B_1^2B_2^2+4B_1B_2}}{2B_1}\rfloor\\
 &\leq&   \lfloor\frac{D-1}{B_1}\rfloor +\lfloor D \frac{B_1B_2+\sqrt{B_1^2B_2^2+4B_1B_2}}{2B_1}\rfloor.
\end{eqnarray*}
This implies the Theorem in this case.

By the Remark following Lemma \ref{keyle3}, we can assume $D\geq 2$.

\textbf{Case 2:} $k_1\leq D-1$

 If  $M_{k+1}\neq \left(
                    \begin{array}{cc}
                      0 & 1 \\
                      D & 0 \\
                    \end{array}
                  \right)
  $,
  by (\ref{equ10}) one has
  \begin{equation*}
   \sum_{j=2}^pk_j \leq   \max\{\lfloor \frac{D}{4}y_0+1\rfloor, D-1\}.
  \end{equation*}
  Direct computation (spliting the computation into $B_1=1$ or $B_1\geq 2$),
  \begin{eqnarray*}
     \sum_{j=1}^pk_j  &\leq & D-1 +\max\{\lfloor \frac{D}{4}y_0+1\rfloor, D-1\}\\
 &\leq&   \lfloor\frac{D-1}{B_1}\rfloor +\lfloor D \frac{B_1B_2+\sqrt{B_1^2B_2^2+4B_1B_2}}{2B_1}\rfloor.
\end{eqnarray*}
This implies the Theorem in this case.

If  $M_{k+1}= \left(
                    \begin{array}{cc}
                      0 & 1 \\
                      D & 0 \\
                    \end{array}
                  \right)
  $, by (\ref{equ9}) one has
  \begin{equation*}
 c_1^{\star}\leq   \lfloor D y_0\rfloor.
\end{equation*}
Thus  in order to prove the Theorem in this case, it suffices to
show
\begin{equation}\label{equ16}
    k_1\leq\frac{D-1}{B_1}.
\end{equation}

  By the Algorithm of $k$th step, we have
  \begin{equation}\label{equ15}
    M_k\Pi_{a_1a_2\cdots a_N}=\Pi_{c_1c_2\cdots c_{N^{\prime}-1}} \left(
                                       \begin{array}{cc}
                                         k_1 & 1 \\
                                         1 & 0 \\
                                       \end{array}
                                     \right)\left(
                                              \begin{array}{cc}
                                                0 & 1 \\
                                               D & 0 \\
                                              \end{array}
                                            \right)\in  \mathcal{D}_2\cup\mathcal{D}_2^{\prime},
  \end{equation}
 and $M_k\Pi_{a_1a_2\cdots a_{N-1}}\in \varepsilon_2$.

 This implies
 \begin{equation}\label{equ16}
    M_k\Pi_{a_1a_2\cdots a_{N-1}}=\Pi_{c_1c_2\cdots c_{N^{\prime}-1}} \left(
                                       \begin{array}{cc}
                                         k_1 & 1 \\
                                         1 & 0 \\
                                       \end{array}
                                     \right)\left(
                                              \begin{array}{cc}
                                                0 & 1 \\
                                               D & 0 \\
                                              \end{array}
                                            \right)\left(
                                              \begin{array}{cc}
                                                a_N & 1 \\
                                               1 & 0 \\
                                              \end{array}
                                            \right)^{-1}.
  \end{equation}
  By direct computation, one has
  \begin{equation}\label{equ16}
    M_k\Pi_{a_1a_2\cdots a_{N-1}}=\Pi_{c_1c_2\cdots c_{N^{\prime}-1}} \left(
                                       \begin{array}{cc}
                                         k_1 & -k_1 a_N+D \\
                                         1 & -a_N \\
                                       \end{array}
                                     \right).
  \end{equation}

  Since all entries of $ M_k\Pi_{a_1a_2\cdots a_{N-1}}$ are non-negative, we must have
  \begin{equation}\label{equ17}
    -k_1 a_N+D\geq 1.
  \end{equation}
  This implies
  \begin{equation*}
    k_1\leq \lfloor \frac{D-1}{B_1} \rfloor,
  \end{equation*}
  since $a_N\geq B_1$. We complete the proof.

\end{proof}
 \section*{Acknowledgments}
 I would like to thank Svetlana
Jitomirskaya for comments   on  earlier versions of
the manuscript.
This research was partially
 supported   by the  AMS-Simons Travel Grant (2016-2018)  and   NSF DMS-1401204.

\footnotesize

\end{document}